\colorlet{cite}{LimeGreen!50!Green}
\tikzset{ 
  baseline=-2.3pt,
  text height=1.5ex, text depth=0.25ex,
  >=stealth,
  node distance=2cm,
  mid/.style={fill=white,inner sep=2.5pt},
}
\newtheoremstyle{mydef}
  {}		% Space above environment
  {}		% Space below environment
  {}		% Body font
  {}		% Indent amount (empty = no indent, \parindent = para indent)
  {\scshape}	% theorem head font
  {. }		% Punctuation after heading
  { }		% Space after heading
  {\thmname{#1}\thmnumber{ #2}\thmnote{ #3}}	% Heading spec
\theoremstyle{plain}	% 'plain' is the default.  The others are 'definition' and 'remark'.
\newtheorem{theorem}{Theorem} % putting [section] on the end here tells latex to number the theorem environment within sections, ie. Theorem 2.3 for the third theorem in section 2.
\newtheorem{corollary}[theorem]{Corollary}
\theoremstyle{mydef} % Here we have used the custom theoremstyle defined above instead of the usual 'definition' style.
\theoremstyle{remark}
\newtheorem{remark}{Remark}
\crefname{part}{Part}{Parts}
\crefname{chapter}{Chapter}{Chapters}
\crefname{section}{Section}{Sections}
\crefname{theorem}{Theorem}{Theorems}
\crefname{proposition}{Proposition}{Propositions}
\crefname{lemma}{Lemma}{Lemmata}
\crefname{corollary}{Corollary}{Corollaries}
\crefname{definition}{Definition}{Definitions}
\crefname{example}{Example}{Examples}
\crefname{remark}{Remark}{Remarks}
\crefname{notation}{Notation}{Notations}
\crefname{figure}{Figure}{Figures}
\crefname{enumi}{Item}{Items}
\newcommand{\inner}[1]{\left\langle #1 \right\rangle}
\newcommand{\abs}[1]{\left\lvert #1 \right\rvert}
\DeclareMathOperator{\diag}{Diag}
\DeclareMathOperator{\id}{id}
\author{Brian Callander and Elizabeth Gasparim}
\date{\today}
\title{Hodge Diamonds of Adjoint Orbits}
\thanks{We are grateful to Koushik Ray and Pushan Majumdar of the Department of Theoretical
  Physics, Indian Association For The Cultivation of Science, Kolkata, for
  running  our large memory (34GB)  computations on their servers. We would
  also like to thank Daniel Grayson for the time he has generously spent
  assisting us with technical issues.
}
\begin{document}
\maketitle

\begin{abstract}
We present a Macaulay2 package that produces compactifica-
tions of adjoint orbits and of the fibres of symplectic Lefschetz fibrations on
them. We use Macaulay2 functions to calculate the corresponding Hodge
diamonds, which then reveal topological information about the Lefschetz
fibrations.
\end{abstract}

\tableofcontents

\section{Hodge diamonds of Lefschetz fibrations via Macaulay2}

The package \emph{ProjAdjoint} provides Macaulay2 algorithms for defining compactifications of adjoint orbits and of the fibres of symplectic Lefschetz fibrations (SLF) on them. 
The Macaulay2 function can then be used to calculate the corresponding Hodge diamonds. 
An SLF is a fibration $f \colon X \to \mathbb C$ that has only Morse type singularities such that the fibres of $f$ are symplectic submanifolds of $X$ outside the critical set, see \cite{Se}. 
We need the topological information provided by Hodge diamonds to study categories of Lagrangian vanishing cycles on symplectic Lefschetz fibrations. 
These play an essential role in the Homological Mirror Symmetry conjecture \cite{Ko}, where such a category appears as the Fukaya category of a Landau-Ginzburg (LG) model. 
A Landau-Ginzburg model is a Kähler manifold X equipped with a holomorphic function $f \colon X \to \mathbb C$ called the superpotential. 
SLFs are nice examples of LG models. 
In fact, a rigorous definition of the Fukaya category is known only for SLFs and not in any greater generality.

A recent theorem of \cite{GGS1} showed the existence of the structure of SLFs on adjoint orbits of semisimple Lie algebras. 
These adjoint orbits are non-compact spaces. 
In fact, they are isomorphic to cotangent bundles of flag varieties \cite{GGS2}. 
Since Macaulay2 calculates Hodge diamonds for compact varieties, one needs to to compactify these orbits for the calculations. 
Expressing the adjoint orbit as an algebraic variety, we homogenise its ideal to obtain a projective variety which serves as our compactification. 
We then obtain topological data for the total space $X$ as well as for the fibres of $f$.

\begin{remark}
  Choosing a compactification is in general a delicate task: a different choice of generators for the defining ideal of the orbit can result in completely different Hodge diamonds of the corresponding compactification. 
  This happens because the homogenisation of an ideal $I$ can change drastically if we vary the choice of generators for $I$. 
  Our package always chooses the ideal  generated by the entries of the minimal polynomial defining the matrices of the orbit.
\end{remark}

\section{Lefschetz fibrations on adjoint orbits}

Let $H_0$ be an element in the Cartan subalgebra of a semisimple Lie algebra $\mathfrak g$, and let $\mathcal O(H_0)$ denote its adjoint orbit.
It is proved in \cite{GGS1} that for each regular element $H \in \mathfrak g$, the function $f_H\colon \mathcal O(H_0)\rightarrow \mathbb C $ given by $f_H(x) = \inner{H,x}$ gives the orbit the structure of a symplectic Lefschetz fibration.

We compactify the orbit by projectivisation; that is, we homogenise the polynomials by adding an extra variable $t$ to obtain a projective variety.

\section{Example: the case of $H_0 = \diag (2,-1,-1) \in \mathfrak{sl}(3,\mathbb C)$ } % (fold)
\label{sec:2m1m1}

In $\mathfrak{sl} (3, \mathbb C)$, consider the orbit $\mathcal O (H_0)$ of
\[
  H_0 = 
  \begin{pmatrix}
    2 & 0 & 0 \\
    0 & -1 & 0 \\
    0 & 0 & -1
  \end{pmatrix}
\]
under the adjoint action.  
We use Macaulay2 to calculate the Hodge diamonds of a compactification of the adjoint orbit $\mathcal O (H_0)$. 
We fix the element
\[
  H = 
  \begin{pmatrix}
    1 & 0 & 0 \\
    0 & -1 & 0 \\
    0 & 0 & 0
  \end{pmatrix}
\]
to define the potential $f_H$.  
The explicit formula for $f_H$ is given below.

The function \emph{compactOrbit} of \emph{ProjAdjoint} takes one input: a list of numbers corresponding to the diagonal entries of  a matrix $H_0 \in \mathfrak{sl} (n, \mathbb C)$.  
It then outputs a projective variety which is a compactification of the adjoint orbit of $H_0$ in $\mathfrak{sl} (n, \mathbb C)$.

The function \emph{compactFibre} takes three inputs: two lists of numbers corresponding to the diagonal entries of matrices $H$ and $H_0$ in $\mathfrak{sl} (n, \mathbb C)$,  and one complex number $\lambda$.  
The matrix $H$ should be regular.  
The output is a projective variety which is a compactification of the  fibre over $\lambda$.
See \cite{github} for the source code and further documentation.

\subsection{Orbit} % (fold)
\label{sec:2m1m1TheOrbit}

We must first define our base field $\mathbb k$ and ring of polynomials $R$.
Note that the prime number $32749$ is the largest prime that Macaulay2 can
work with (\cite{M2book}).
\begin{verbatim}
  i1 : k = ZZ/32749;

  i2 : R = k[x_1, x_2, y_1..y_3, z_1..z_3, t];
\end{verbatim}
The variable $t$ will be used for projectivisation.  A general element
$A \in \mathfrak{sl} \left( 3, \mathbb{C} \right)$ has the form
\begin{verbatim}
  i3 : A = matrix{
             {x_1, y_1, y_2},
             {z_1, x_2, y_3},
             {z_2, z_3, -x_1 - x_2}
       };

               3       3
  o3 : Matrix R  <--- R
\end{verbatim}
and we will also make use of the identity matrix
\begin{verbatim}
  i4 : Id = id_(k^3);

               3       3
  o4 : Matrix k  <--- k
\end{verbatim}
In our example, the adjoint orbit $\mathcal{O} (H_0)$ consists of all the
matrices with the minimal polynomial $(A + \id)(A - 2\id)$, so we are
interested in the variety cut out by the equation \verb+minPoly+:
\begin{verbatim}
  i6 : minPoly = (A + Id)*(A - 2*Id);

               3       3
  o6 : Matrix R  <--- R

  i7 : I = ideal minPoly;

  o7 : Ideal of R
\end{verbatim}
To obtain a projectivisation of $X$, we first homogenise its ideal $I$, then
take the corresponding projective variety.  Saturating the ideal does not
change the projective variety it defines but can make computations faster.
\begin{verbatim}
  i8 : Ihom = saturate homogenize(I,t);

  o8 : Ideal of R

  i9 : Xproj = Proj(R/Ihom);
\end{verbatim}
One checks with the command \verb+dim Xproj+ that $\dim \overline{X} = 4$.
To check that
$\overline{X}$ is non-singular, we use:
\begin{verbatim}
  i10 : codim singularLocus Ihom 

  o10 = 9
\end{verbatim}
Since the codimension of the singularities is $9$ but the dimension of the
ambient projective space is $8$, we deduce that our projective variety
$\overline{X}$ must be non-singular. 

Now we calculate the Hodge diamond of $\overline X$.  The Hodge numbers $h^{i,j}$ for $i+j \le
4$, $i \ge j$, are computed with the command \verb+hh^(i,j) Xproj+.
Since $\overline{X}$ is non-singular, the other entries of the Hodge diamond
are given by the classical symmetries, as shown below. 

\[
  \begin{array}{ccccccccc}
    &&&& 1 &&&& \\
    &&& 0 && 0 &&& \\
    && 0 && 2 && 0 && \\
    & 0 && 0 && 0 && 0 &  \\
    0 && 0 && 3 && 0 && 0 \\
    & 0 && 0 && 0 && 0 & \\
    && 0 && 2 && 0 && \\
    &&& 0 && 0 &&& \\
    &&&& 1 &&&& 
  \end{array}
\]

\subsection{Regular fibre} % (fold)
\label{sec:2m1m1TheRegularFibre}

To define the regular and critical fibres, we also need the potential,
which in our case is given by:
\begin{verbatim}
  i20 : potential = x_1 - x_2;
\end{verbatim}
The critical values of this potential are $\pm 3$ and $0$.
Since all regular fibres of an SLF are isomorphis, it suffices to chose the regular value $1$.
We then define the regular fibre $X_1$  as the
variety in $\mathfrak {sl} (3, \mathbb C) \cong \mathbb C^8$ corresponding to
the ideal $J$:
\begin{verbatim}
  i21 : J = ideal(minPoly) + ideal(potential-1);

  o21 : Ideal of R
\end{verbatim}
We then homogenise $J$ to obtain a projectivisation $\overline{X}_1$ of the
regular fibre $X_1$:
\begin{verbatim}
  i22 : Jhom = saturate homogenize(J,t);

  o22 : Ideal of R

  i23 : X1proj = Proj(R/Jhom);
\end{verbatim}
We check with the command \verb+dim X1proj+ that $\dim \overline{X}_1 = 3$. We use the command
\begin{verbatim}
  i24 : codim singularLocus Jhom 

  o24 = 9
\end{verbatim}
to test for singularities.  Since this codimension is $9$, we see that
$\overline{X}_1$ is indeed non-singular.
Now
we calculate $h^{i,j}$ for $i+j \le 3$ e $i \ge j$ with the command
\verb+hh^(i,j) X1proj+.
Since $\overline{X}_1$ is non-singular, the other entries of the Hodge diamond
are obtained via the classical symmetries.

\[
  \begin{array}{ccccccc}
    &&& 1 &&& \\
    && 0 && 0 && \\
    & 0 && 2 && 0 & \\
    0 && 0 && 0 && 0  \\
    & 0 && 2 && 0 & \\
    && 0 && 0 && \\
    &&& 1 &&& 
  \end{array}
\]

\begin{remark}
We used the same method to calculate the Hodge diamonds for the singular fibre over $0$ 
and obtained the same Hodge diamond as for the regular fibres.
\end{remark}

\begin{remark}
  More details of this example in appear in \cite{brianThesis}.
\end{remark}

\section{Generalisation: $H_0 = \diag (n, -1, \dotsc, -1) \in \mathfrak{sl}(n+1,\mathbb C)$} 
\label[section]{sec:generalisation}

We generalise our example of $\mathfrak{sl} (3, \mathbb C)$ to
$\mathfrak{sl} (n+1, \mathbb C)$.  To obtain the minimal flag, we set
$H_0 = \diag (n, -1, \dotsc, -1)$ and $H = \diag (1,-1,0, \dotsc, 0)$.  Then 
the diffeomorphism type of the adjoint orbit is given by $\mathcal O
(H_0) \simeq
 T^* \mathbb P^{n}$ (see \cite{GGS2}), and $H$ gives the potential $x_1 - x_2$
as before.  If we  compactify this orbit to $\mathbb P^{n} \times \mathbb
(\mathbb P^{n} )^\ast$, then the Hodge classes of the compactification  are
given by $h^{p,p} = n+1 - \abs{n-p}$ and the
remaining  Hodge numbers are $0$.  An application of the Lefschetz hyperplane
theorem determines all but the Hodge numbers of the middle row of the compactification of
the regular fibre.

\section{Computational corollaries and pitfalls}

The following two corollaries follow immediately from observing the Hodge diamonds we obtained.

\begin{corollary} 
  Let $H_0 = \diag(n,-1, \cdots, -1) \in \mathfrak{sl} (n+1, \mathbb C)$ and $H = \diag(1,-1,0, \cdots, 0)$.  Then the orbit compactifies holomorphically and symplectically to a trivial product.
\end{corollary}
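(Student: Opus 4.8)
The plan is to exhibit $\mathbb{P}^{n}\times(\mathbb{P}^{n})^{\ast}$ explicitly as the compactification obtained by homogenisation, read the holomorphic and symplectic claims off this description, and note that the Hodge diamond of \cref{sec:generalisation} reappears only as a K\"unneth check ($h^{p,p}(\mathbb{P}^{n}\times(\mathbb{P}^{n})^{\ast})=n+1-\abs{n-p}$, the remaining Hodge numbers zero). First I would make the orbit concrete: a traceless matrix $A$ satisfies $(A-n\,\id)(A+\id)=0$ exactly when $A+\id$ is diagonalisable of rank one with (necessarily) non-zero eigenvalue $n+1$, that is, $A+\id=\tfrac{n+1}{w(v)}\,v\otimes w$ for a point $[v]\in\mathbb{P}^{n}$ and a point $[w]\in(\mathbb{P}^{n})^{\ast}$ --- a line and a hyperplane in $\mathbb{C}^{n+1}$ --- with $w(v)\neq 0$. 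Hence $A\mapsto(\image(A+\id),\ker(A+\id))$ identifies $\mathcal{O}(H_0)$ with $\mathbb{P}^{n}\times(\mathbb{P}^{n})^{\ast}\setminus\mathcal{I}$, where $\mathcal{I}=\{([v],[w]):w(v)=0\}$ is the incidence divisor; this is precisely the realisation $\mathcal{O}(H_0)\simeq T^{\ast}\mathbb{P}^{n}$ of \cite{GGS2}.

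Next I would compactify the inverse of this identification. Compose the Segre embedding $\mathbb{P}^{n}\times(\mathbb{P}^{n})^{\ast}\hookrightarrow\mathbb{P}\bigl(\mathfrak{gl}(n+1,\mathbb{C})\bigr)$, $([v],[w])\mapsto[v\otimes w]$, with the linear isomorphism $[P]\mapsto\bigl[\,(n+1)P-(\operatorname{tr}P)\id\ :\ \operatorname{tr}P\,\bigr]$ identifying $\mathbb{P}\bigl(\mathfrak{gl}(n+1,\mathbb{C})\bigr)$ with $\mathbb{P}\bigl(\mathfrak{sl}(n+1,\mathbb{C})\oplus\mathbb{C}\bigr)=\mathbb{P}^{(n+1)^{2}-1}$. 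The result is a closed immersion $\Phi$; using $(v\otimes w)^{2}=w(v)\,v\otimes w$ one checks that on $\mathbb{P}^{n}\times(\mathbb{P}^{n})^{\ast}\setminus\mathcal{I}$ it is the inverse of the map above --- with last coordinate $t=\operatorname{tr}(v\otimes w)=w(v)$ in the role of the homogenising variable of \cref{sec:2m1m1} --- and that $\Phi^{-1}(\{t=0\})=\mathcal{I}$. Being a morphism from a complete variety that restricts to an isomorphism onto the dense open subset $\mathcal{O}(H_0)$, $\Phi$ has image the Zariski closure $\overline{\mathcal{O}(H_0)}$, which is the compactification $\overline X$ (for $n=2$ this is confirmed by the smoothness test of \cref{sec:2m1m1}; see the final paragraph for general $n$). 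Therefore $\overline X\cong\mathbb{P}^{n}\times(\mathbb{P}^{n})^{\ast}$: the orbit compactifies holomorphically to a trivial product, $\overline X$ is smooth, and the boundary divisor $\overline X\setminus\mathcal{O}(H_0)$ is $\Phi(\mathcal{I})\cong\mathcal{I}$.

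For the symplectic assertion I would promote this biholomorphism to a symplectomorphism of pairs. By \cite{GGS1,GGS2} the symplectic form on $\mathcal{O}(H_0)$ making $f_H$ a Lefschetz fibration corresponds, on $T^{\ast}\mathbb{P}^{n}$, to a twisted cotangent form $\omega_{\mathrm{can}}+\pi^{\ast}\beta$ with $\beta$ K\"ahler; transported by $\Phi^{-1}$, its class lies in $H^{2}(\mathbb{P}^{n}\times(\mathbb{P}^{n})^{\ast};\mathbb{R})=\mathbb{R}^{2}$ and, matching the restrictions to the two $\mathbb{P}^{n}$-factors, is represented by a product K\"ahler form $a\,\mathrm{pr}_{1}^{\ast}\omega_{\mathrm{FS}}+b\,\mathrm{pr}_{2}^{\ast}\omega_{\mathrm{FS}}$ with $a,b>0$. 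Hence $\mathbb{P}^{n}\times(\mathbb{P}^{n})^{\ast}$ is a symplectic compactification and splits as a product of symplectic manifolds.

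The step I expect to be the main obstacle is controlling the boundary: one must verify that the compactification actually taken --- the saturated homogenisation of the minimal-polynomial ideal --- adjoins to $\mathcal{O}(H_0)$ \emph{exactly} the divisor $\mathcal{I}$. The naive homogenisation vanishes on the whole square-zero locus $\{A^{2}=0,\ t=0\}$, which for $n\ge 3$ is strictly larger than $\Phi(\mathcal{I})$, so it must be excised by saturating with respect to $t$ itself rather than merely the irrelevant ideal --- a delicacy in the spirit of the opening remark on the dependence of compactifications on their presentation. Fixing the constants $a,b$ in the symplectic step is the other point needing some care; everything else --- the Segre computation and the cohomological normalisation --- is routine once the identification of $\mathcal{O}(H_0)$ with the incidence complement is in hand.
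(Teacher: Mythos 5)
Your proof is correct in its essentials but takes a genuinely different --- and more constructive --- route than the paper. The paper's argument is soft: it combines the differentiable embedding of $\mathcal O(H_0)$ into $\mathbb P^n\times(\mathbb P^n)^{\ast}$ from \cite{GGS2} with the computed Hodge diamond, reads off from the diamond that the compactification has the topological type of a $\mathbb P^n$-bundle over $\mathbb P^n$, and concludes triviality of the bundle; the diamond is the proof, not a consistency check. You instead build the isomorphism by hand: the eigenspace map $A\mapsto(\image(A+\id),\ker(A+\id))$ identifies the orbit with the incidence complement, and the Segre embedding composed with your linear change of coordinates exhibits $\mathbb P^n\times(\mathbb P^n)^{\ast}$ as the projective closure, with $\operatorname{tr}(v\otimes w)$ playing the role of the homogenising variable $t$. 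This buys an actual biholomorphism rather than a topological identification inferred from Hodge numbers (which by themselves do not force a space to be a projective bundle, so your version closes a real logical gap in the printed proof), and it identifies the boundary divisor as the incidence variety. What the paper's approach buys is independence from the presentation of the ideal: it only uses the output diamond, whereas your argument must confront the fact that homogenising the generators $(A+\id)(A-n\id)$ adjoins the entire square-zero locus at $t=0$, which for $n\ge 3$ strictly contains the incidence divisor unless one saturates with respect to $t$. You are right to flag this as the delicate point --- it is exactly the phenomenon warned about in the paper's opening remark, and neither your argument nor the paper's fully resolves it beyond the computed case $n=2$ (where square-zero in $\mathfrak{sl}(3)$ forces rank one, so the naive homogenisation already gives the closure). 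On the symplectic side both arguments are sketchy; your cohomological matching of Kähler classes is at least as substantive as the paper's bare assertion, though pinning down which symplectic form on the compactification extends the one of \cite{GGS1} would be needed to make either version complete.
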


\begin{proof}
For the examples we considered here, \cite{GGS2} showed that  $\mathcal O(H_0)$
can be embedded differentiably   into $\mathbb P^n\times {\mathbb P^n}^{\ast}$. 
As an outcome of our computations, we verify  that the compactifications are  
holomorphically  and symplectically isomorphic to   $\mathbb P^n\times {\mathbb P^n}^{\ast}$ as well.
In fact, 
our package produces a compactification of the orbit embedded into $\mathbb P^{n^2-1}$
and the diamond shows that the compactified orbit has the topological type of a $\mathbb P^n$ bundle over $\mathbb P^n$. These combined
imply the bundle is trivial.
\end{proof}

\begin{corollary} 
An extension of the potential $f_H$ to the compactification  $\mathbb P^n\times {\mathbb P^n}^{\ast}$
cannot be of Morse type; that is, it must have degenerate singularities.
\end{corollary}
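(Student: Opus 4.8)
The plan is to derive a contradiction from the hypothetical existence of a Morse-type (holomorphic) extension by comparing the Euler characteristic computed from the Lefschetz fibration data with the Euler characteristic read off from the Hodge diamond of the compactification. First I would recall that, by the previous corollary, the compactification is holomorphically isomorphic to $\mathbb P^n \times (\mathbb P^n)^\ast$, whose Euler characteristic is $(n+1)^2$; this is also visible by summing the entries $h^{p,p} = n+1-\abs{n-p}$ of the diamond. On the other hand, if the extended potential $\bar f \colon \mathbb P^n \times (\mathbb P^n)^\ast \to \mathbb C$ (or rather to $\mathbb P^1$, after adding the fibre at infinity) were of Morse type, then the classical Lefschetz-fibration formula would express $\chi$ of the total space in terms of $\chi$ of a generic fibre $F$ and the number $\mu$ of critical points: $\chi\big(\mathbb P^n \times (\mathbb P^n)^\ast\big) = \chi(\mathbb P^1)\cdot\chi(F) + \mu = 2\chi(F) + \mu$, with $\mu \ge 0$.

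Next I would pin down $\chi(F)$. The generic fibre of $\bar f$ is the compactified regular fibre $\overline X_1$, whose Hodge diamond we computed (for $n=2$ explicitly, and in general via the Lefschetz hyperplane theorem together with the explicit Hodge numbers stated in Section~\ref{sec:generalisation}); summing its entries gives a value of $\chi(\overline X_1)$ that I would compute in closed form in $n$. Substituting into $2\chi(\overline X_1) + \mu = (n+1)^2$ then forces $\mu$ to equal a specific integer. The crux of the argument is to show this forced value is negative (equivalently, that $2\chi(\overline X_1) > (n+1)^2$), which is impossible for a count of critical points; this contradiction shows no Morse-type extension exists. For $n=2$ one reads off $\chi(\overline X_1) = 1+2+2+1 = 6$ and $\chi(\mathbb P^2\times(\mathbb P^2)^\ast) = 9$, giving $\mu = 9 - 12 = -3 < 0$, which already settles that case; I would then check that the analogous inequality persists for all $n$.

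The main obstacle is making the general-$n$ step fully rigorous: the Lefschetz hyperplane theorem determines all Hodge numbers of $\overline X_1$ except those in the middle row, so $\chi(\overline X_1)$ is only known up to the contribution of that middle row. I would handle this by noting that the undetermined middle-row Hodge numbers are non-negative, so the true $\chi(\overline X_1)$ is at least the value obtained by setting them to zero; if even that lower bound already exceeds $\tfrac12(n+1)^2$, the contradiction goes through regardless. A secondary technical point is justifying the Euler-characteristic formula for a holomorphic map to $\mathbb P^1$ with only Morse singularities and a smooth fibre at infinity — this is standard (the map is a smooth fibre bundle away from finitely many points, and a Morse singularity changes $\chi$ of the fibre by $1$), but I would state it carefully, since the compactified potential genuinely does have a fibre over $\infty$ that must be accounted for, and one should check that including or excluding it does not rescue the Morse hypothesis.
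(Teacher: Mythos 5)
Your Euler-characteristic strategy is genuinely different from the paper's argument, which goes through Picard--Lefschetz theory: the computed middle row of the compactified regular fibre's Hodge diamond is identically zero, so an extended Lefschetz fibration would have no vanishing cycles, which is impossible for a fibration with singularities. Your count is a quantitative cousin of this, and for $n=2$ it does work with the computed diamonds: $\chi\bigl(\mathbb P^2\times(\mathbb P^2)^\ast\bigr)=9$ and $\chi(\overline X_1)=6$ force $\mu=9-12=-3<0$. (Note the sign in $\chi(X)=2\chi(F)+\mu$ is correct only because the fibre has odd complex dimension $2n-1$, so each node raises $\chi$ of its fibre by $1$; this should be said explicitly.)

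There is, however, a concrete error in the step you propose for handling general $n$. The undetermined middle row of $\overline X_1$ sits in cohomological degree $2n-1$, which is odd, so its entries enter $\chi(\overline X_1)$ with a \emph{minus} sign; setting them to zero gives an \emph{upper} bound for $\chi(\overline X_1)$, not the lower bound you assert. Concretely, if $m$ is the sum of the middle row, the Lefschetz hyperplane theorem and Poincar\'e duality give $\chi(\overline X_1)=n(n+1)-m$, and your balance equation yields $\mu=2m-(n^2-1)$, which is nonnegative as soon as $m\ge (n^2-1)/2$. A large middle row therefore rescues the Morse hypothesis rather than being harmless, so the claim that ``the contradiction goes through regardless'' fails; like the paper's proof, your argument cannot dispense with the computational observation that the middle row vanishes (or at least is small). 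A second point you flag but do not close is essential rather than cosmetic: a nonconstant holomorphic function on a compact manifold cannot map to $\mathbb C$, so the extension must be a map to $\mathbb P^1$ (or a pencil), and the fibre over $\infty$ --- precisely where the degeneracy is expected to live --- enters the count. Unless Morse-type behaviour is assumed over $\infty$ as well, $\chi(F_\infty)$ is a free parameter and the balance equation gives no contradiction at all.
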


\begin{proof} Our potential has singularities at $wH_0, w\in \mathcal W$.  Now
observe that the Hodge diamond of our compactified regular fibres have all zeroes in the middle row, 
hence any extension of the  fibration  to the compactification will have no vanishing cycles. 
However, the existence of a Lefschetz fibration  with singularities and without vanishing cycles  
is prohibited by the fundamental theorem of Picard-Lefschetz theory.
\end{proof}

\begin{remark}
  [Computational pitfalls]
  Macaulay2 greatly facilitates calculations of Hodge numbers that are unfeasible by hand. 
  However, the memory requirements rise steeply with the dimension of the variety -- especially for the Hodge classes $h^{p,p}$.
\end{remark}


\begin{thebibliography}{99}

 \bibitem[C1]{brianThesis}
    B. Callander,
    \emph{Lefschetz Fibrations},
    Master's Thesis,
    Universidade Estadual de Campinas (2013).

 \bibitem[C2]{github}
    B. Callander,
    \emph{ProjAdjoint},
    GitHub repository,
    \href{https://github.com/kummerspeck/ProjAdjoint}{https://github.com/kummerspeck/ProjAdjoint}.

  \bibitem[EGSS]{M2book}
    D. Eisenbud, D. Grayson, M. Stillman, B. Sturmfels,
    \emph{Computations in Algebraic Geometry with Macaulay2}, 
    Springer
    (2002).



  \bibitem[GGS1]{GGS1}
    E. Gasparim, L. Grama, L. A. B. San Martin,
    \emph{Lefschetz fibrations on adjoint orbits},
    arXiv:1309.4418.

  \bibitem[GGS2]{GGS2}
    E. Gasparim, L. Grama, L. A. B. San Martin,
    \emph{Adjoint orbits and  Lagrangian graphs},
    preprint.

  \bibitem[M2]{M2}
    D.  Grayson, M. Stillman,
    \emph{Macaulay2}, 
    a software system for research in algebraic geometry,
    Available at http://www.math.uiuc.edu/Macaulay2/.

  \bibitem[Ko]{Ko}
    M. Konsevich,
    \emph{Homological algebra of Mirror Symmetry},
    Proc. International Congress of Mathematicians (Zurich, 1994) Birkh\"auser, Basel (1995) 120--139.

  \bibitem[Se]{Se}
    P. Seidel, 
    \emph{Fukaya categories and Picard-Lefschetz theory},
    Zurich Lectures in Advanced Mathematics, European Math. Soc., Zurich (2008).

 

\end{thebibliography}
\end{document}